\providecommand{\tabularnewline}{\\}
\numberwithin{equation}{section}
\numberwithin{figure}{section}
\theoremstyle{plain}
\newtheorem{thm}{\protect\theoremname}
\theoremstyle{plain}
\newtheorem{prop}[thm]{\protect\propositionname}
\theoremstyle{remark}
\newtheorem*{rem*}{\protect\remarkname}
\providecommand{\theoremname}{Theorem}
\providecommand{\propositionname}{Proposition}
\providecommand{\remarkname}{Remark}
\providecommand{\theoremname}{Theorem}
\begin{document}
\title{Two-Sample Test Based on Classification Probability}
\author{Haiyan Cai, Bryan Goggin, Qingtang Jiang}
\address{Department of Mathematics and Computer Science, University of Missouri
- St. Louis, One University Boulevard, St. Louis, Missouri, 63121}
\begin{abstract}
Robust classification algorithms have been developed in recent years
with great success. We take advantage of this development and recast
the classical two-sample test problem in the framework of classification.
Based on the estimates of classification probabilities from a classifier
trained from the samples, a test statistic is proposed. We explain
why such a test can be a powerful test and compare its performance
in terms of the power and efficiency with those of some other recently
proposed tests with simulation and real-life data. The test proposed
is nonparametric and can be applied to complex and high dimensional
data wherever there is a classifier that provides consistent estimate
of the classification probability for such data.
\end{abstract}

\keywords{Two-sample test, classification.}

\maketitle
\newpage

\section{Introduction}

A two-sample test detects if two sets of data have different underlying
probability distributions. This is a classical problem in statistics
with numerous applications in a range of scientific inquiries. The
problem becomes challenging when data have higher dimensions relative
to their sizes and are in complex form (texts, curves, images, graphs
etc.). Classical methods with a focus on testing the difference in
the first or second moments of the distributions have become inadequate
for such data and remedies have been suggested \cite{Bai-Saranadasa,Cai-Liu-Xia,Chen-Qin,Chakraborty-Chaudhuri,Hu-Bai}.
Other and more general methods have also been proposed. For example,
\cite{Biau-Gyorfi} investigates a test based on the $L_{1}$ distance
between the two empirical distributions, \cite{Gretton-Borgwardt-al}
proposes a test statistic (the maximum mean discrepancy) which is
the largest of the differences in means over functions in the unit
ball of a reproducing kernel Hilbert space and \cite{Chwialkowski-al.}
proposes tests based on an ensemble of distances between analytic
functions representing each of the distributions. How these tests
relate to the most powerful test in the Nayman-Pearson framework is
not clear however.

On the other hand, robust algorithms have been developed recently
for classification problems. These algorithms are capable of training
classifiers to discriminate complex and very high dimensional data
from different distributions. A good classifier learned from training
data can predict with high probability the correct class for a new/testing
data point. It is therefore reasonable to ask if a good classifier
can be adopted for testing the difference between two samples from
different probability distributions \cite{Lopez-Paz-Oquab,Ramdas-Singh-Wasserman}.
One can assign labels 1 or 0 to data points from the two samples.
Then an intuitive approach is to utilize the prediction accuracy of
a classifier trained from the labeled data for the test. Under the
null hypothesis of identical distributions, the prediction accuracy
(the probability of predicting class labels correctly) should not
be too high, and therefore the null hypothesis should be rejected
if otherwise. The versatility of such a test has been demonstrated
in \cite{Lopez-Paz-Oquab} with large and high dimensional datasets.
There are some disadvantages in this approach however. The prediction
accuracy has to be estimated through cross-validation which makes
the test less efficient in data utilization and can slow down the
computation. More importantly, as we will see in the next section,
a more powerful test which is not based on prediction accuracy can
be derived. 

Following a different consideration, we propose a simple yet powerful
method for the two-sample test. By treating the two sample data as
a set of sample feature points with labels of either 1 or 0, depending
on which sample a feature point is from, we can re-formulate the original
null and alternative hypotheses into an equivalent pair of hypotheses
on the joint distribution of the feature points and their labels.
In this setting, the concept of classification probability in classification
problems, the probability that the label of a given feature point
is 1, is naturally induced, and a classification algorithm, like logistic
regression, random forest, $k$-nearest neighbor or support vector
machine, can be used to estimate such a probability from the given
data \cite{GyorfiL,Kruppa-et-al,Malley-el-al,Steinwart-Christmann}.
The nice thing is that the likelihood ratio in the two-sample testing
problem can be expressed exactly as the odds ratio of the classification
probability multiplied by a known constant. This connection allows
us to approximate the likelihood ratio with the odds ratio of the
classification probability to derive a powerful test statistic. Unlike
the test based on classification accuracy which in \cite{Ramdas-Singh-Wasserman}
is considered as an indirect approach to the two-sample problem, our
classification probability test is basically an approximation of the
likelihood ratio test. Therefore asymptotically there should be no
loss of information like the classification accuracy test.

Our approach assumes that we have a consistent estimator of the classification
probability. There are some results in the literature \cite{GyorfiL,Steinwart,Steinwart-Christmann,Wager-Athey},
but more studies are needed in order to identify better theoretical
sufficient conditions and this will not be the focus of the current
paper. Through examples, we will compare our method with the maximum
mean discrepancy (MMD) test \cite{Gretton-Borgwardt-al} and a classifier
test based on classification accuracies \cite{Lopez-Paz-Oquab,Ramdas-Singh-Wasserman}
and show significant improvements in power and efficiency of our test.
We will call our test Classification Probability Test (CP test or
CPT for short).

In the following, we introduce our test in Section 2 and demonstrate
its performance through examples in Section 3. Summary and discussions
are given in Section 4.

\section{Two-sample tests based on classification probability}

Suppose we have independent samples from probability distributions
$F$ and $G$ respectively on some feature space $\mathcal{X}$. For
simplicity, we assume $F$ and $G$ have density functions $f(x)$
and $g(x)$ respectively on $\mathcal{X}$ relative to some reference
measure. Let
\[
\mathcal{S}_{f}=\{X_{f,1},...,X_{f,n}\}\sim f^{n}\text{ and }\mathcal{S}_{g}=\{X_{g,1},...,X_{g,m}\}\sim g^{m}
\]
 be two independent samples. The problem is to test
\begin{equation}
H_{0}:f=g\ \text{ vs. }H_{1}:f\not=g.\label{eq:the test}
\end{equation}

It is possible to reformulate this test within the framework of a
classification problem, as it will become clear below. To this end,
we first pool the samples together and let $\mathcal{S}=\mathcal{S}_{f}\cup\mathcal{S}_{g}$,
$N=n+m$. To each $X_{i}\in\mathcal{S},i=1,...,N,$ we assign a class
variable $Y_{i}\in\{0,1\}$ such that
\[
Y_{i}=\begin{cases}
1 & \text{if }X_{i}\in\mathcal{S}_{f}\\
0 & \text{if }X_{i}\in\mathcal{S}_{g}.
\end{cases}
\]
In this way we obtain an augmented dataset: $(X_{i},Y_{i}),i=1,...,N$.
The data in the new form can be viewed as an i.i.d. samples from the
joint distribution of a pair of random variables $(X,Y)$ for which
$P(Y=1)=\pi=1-P(Y=0)$ for some $\pi\in(0,1)$ and the conditional
distribution of $X$ is $f$ given $Y=1$, and $g$ given $Y=0$.
Let $p(x)=P(Y=1|X=x)$ be the class probability for a given $x\in\mathcal{X}$.
Then, by Bayes Theorem,
\begin{equation}
p(x)=\frac{\pi f(x)}{\pi f(x)+(1-\pi)g(x)}.\label{eq:cond prob}
\end{equation}
Note that $p(x)$ depends on $\pi$. At first glance, it might seem
that we are giving ourself one extra parameter $\pi$ to deal with.
But in reallity, we don't need to know its value, as long as we consider
$n/N$ as its estimate or simply assume $n/N$ is $\pi$.

Next, let us consider the power of a test in testing (\ref{eq:the test})
at a given significance level $\alpha$. Note that the underlying
density functions $f$ and $g$ are unknown but fixed. It turns out
that an upper bound for the power of any test in our two-sample problem
can be expressed in terms of the class probability $p(x)$. To see
this, let
\begin{equation}
U=\frac{1}{n}\sum_{i=1}^{n}\left(\log\frac{p(X_{f,i})}{1-p(X_{f,i})}-\log\frac{\pi}{1-\pi}\right).\label{eq:U}
\end{equation}
Note that the functions $p(x)$ and $f(x)$ have the same support
and $X_{f,i}$ are sampled from $f$, therefore the terms $p(X_{f,i})$
in (\ref{eq:U}) are always strictly positive. If $p(X_{f,i})=1$
for some $i=1,...,n$, we set $U=\infty$. This happens only when
some $X_{f,i}$ assumes a value that is not in the support of the
density function $g(x)$. The following result motivates our test
to be given later. It is essentially a form of the usual Neyman-Pearson
Lemma.
\begin{prop}
\label{prop:prop 1} Let $T$ be any test statistic based on samples
from the probability densities $f$ and g for testing (\ref{eq:the test}).
For every $\alpha\in(0,1)$, let $C_{T}\subset\mathbb{R}$ be the
critical region of $T$ such that $P(T\in C_{T})=\alpha$ under $H_{0}$.
Then the power of $T$ has the bound:
\begin{equation}
P(T\in C_{T})\le P(U>c_{U}),\label{eq:bound for power}
\end{equation}
where $c_{U}\in\mathbb{R}$ is such that
\[
P(U>c_{U})=\alpha\text{ under }H_{0}.
\]
\end{prop}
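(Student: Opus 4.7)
The plan is to recognize $U$ (up to the factor $n$) as the Neyman--Pearson log-likelihood-ratio statistic for a suitable simple-vs-simple reduction of the two-sample problem, and then apply the Neyman--Pearson Lemma directly. Once the right pair of simple hypotheses is identified, the proof is essentially one invocation of NP; the only real work is the correct interpretation of $U$.

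First I would verify the algebraic identity, obtained by substituting the Bayes formula (\ref{eq:cond prob}) into the summand of $U$:
\[
\log\frac{p(x)}{1-p(x)}-\log\frac{\pi}{1-\pi}=\log\frac{f(x)}{g(x)},
\]
so that $nU=\sum_{i=1}^{n}\log\bigl(f(X_{f,i})/g(X_{f,i})\bigr)$. Thus $nU$ is exactly the log-likelihood ratio for deciding between $\mathcal{S}_f\sim g^n$ and $\mathcal{S}_f\sim f^n$.

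Next I would fix the alternative densities $(f,g)$ with $f\neq g$ and introduce the simple pair $H_0^{s}:\mathcal{S}_f\sim g^n,\,\mathcal{S}_g\sim g^m$ versus $H_1^{s}:\mathcal{S}_f\sim f^n,\,\mathcal{S}_g\sim g^m$. Because $\mathcal{S}_g$ has identical law under the two hypotheses, its factors cancel in the joint likelihood ratio and
\[
\Lambda=\frac{\prod_i f(X_{f,i})\prod_j g(X_{g,j})}{\prod_i g(X_{f,i})\prod_j g(X_{g,j})}=\prod_{i=1}^{n}\frac{f(X_{f,i})}{g(X_{f,i})}=e^{nU}.
\]
By the Neyman--Pearson Lemma, among all tests of size $\le\alpha$ under $H_0^{s}$, the rule $\{U>c_U\}$ with $c_U$ chosen so that $P(U>c_U\mid H_0^{s})=\alpha$ is most powerful. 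Any $T$ of size $\alpha$ under the composite null $H_0:f=g$ satisfies $P(T\in C_T\mid H_0^{s})\le\alpha$ as a special case, so NP yields $P(T\in C_T\mid H_1^{s})\le P(U>c_U\mid H_1^{s})$, which is the stated bound.

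The main subtlety I anticipate is clarifying the interpretation of $U$ under the null. Read literally, $f=g$ forces $p\equiv\pi$ and $U\equiv 0$, which would render the equation $P(U>c_U)=\alpha$ vacuous. The implicit convention is to hold $p$ fixed at its value for the specific alternative $(f,g)$ and then evaluate the distribution of $U$ under whichever of $H_0^{s},H_1^{s}$ is relevant; this is exactly what the simple-vs-simple reduction above does. With that bookkeeping in place, the proposition reduces to Neyman--Pearson applied to the right reduction, and no further technical obstacle arises.
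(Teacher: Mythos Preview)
Your proposal is correct and follows essentially the same route as the paper: reduce to the simple pair $H_0^{s}:\mathcal{S}_f\sim g^n$ versus $H_1^{s}:\mathcal{S}_f\sim f^n$, identify $nU$ with the Neyman--Pearson log-likelihood ratio via the Bayes identity, and invoke the NP Lemma. The only cosmetic difference is that the paper phrases the same reduction through the joint density $h(x,y)$ of the labeled pair $(X,Y)$ before arriving at the identical likelihood ratio, whereas you cancel the $\mathcal{S}_g$ factors directly; your remark on the interpretation of $U$ under $H_0$ (holding $p$ fixed at the alternative) is a useful clarification that the paper leaves implicit.
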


\begin{proof}
We can restate the hypotheses in (\ref{eq:the test}) in a slightly
different but equivalent way. First, let us us assume that sample
$\mathcal{S}_{g}$ is from some unknown density $g$, and $f$ is
another unknown density which is different from $g$:
\[
f\not=g.
\]
Under this assumption, the two-sample problem can then be stated as:
under $H_{0}$, the sample $\mathcal{S}_{f}$ is from the same density
$g$ as $\mathcal{S}_{g}$ and, under $H_{1}$, $\mathcal{S}_{f}$
is from $f$, or
\begin{equation}
H_{0}':\mathcal{S}_{f}\sim g^{n}\ vs.\ H_{1}':S_{f}\sim f^{n}.\label{eq:H'-1}
\end{equation}
Let's adopt this form of the two-sample test in the proof. Next, we
consider the joint density of $(X,Y)$. Under $H'_{1}$, this joint
density has the form
\[
h_{1}(x,y)=(yf(x)+(1-y)g(x))\pi^{y}(1-\pi)^{1-y}.
\]
This $h_{1}$ can be rewritten as
\[
h_{1}(x,y)=[g(x)+y(f(x)-g(x))]\pi^{y}(1-\pi)^{1-y}.
\]
We see that the joint density of $(X,Y)$ under $H_{0}$ takes the
form
\[
h_{0}(x,y)=g(x)\pi^{y}(1-\pi)^{1-y}.
\]
Let $h(x,y)$ denote the generic joint density of $(X,Y)$. We can
rewrite (\ref{eq:H'-1}) as
\begin{equation}
H''_{0}:h=h_{0}\ vs.\ H''_{1}:h=h_{1}.\label{eq:H''}
\end{equation}
The log of the density ratio for testing (\ref{eq:H''}) can be written
as:
\begin{equation}
\log\frac{h_{0}(x,y)}{h_{1}(x,y)}=y\log\frac{g(x)}{f(x)},y\in\{0,1\},x\in\mathcal{X}.\label{eq:ll}
\end{equation}
From (\ref{eq:cond prob}), 
\[
\frac{f(x)}{g(x)}=\frac{(1-\pi)p(x)}{\pi(1-p(x))}
\]
and hence $U$ given in (\ref{eq:U}) can also be written as
\[
U=\frac{1}{n}\sum_{i=1}^{n}\log\frac{f(X_{f,i})}{g(X_{f,i})}=-\frac{1}{n}\sum_{i=1}^{N}Y_{i}\log\frac{g(X_{i})}{f(X_{i})}.
\]
This, plus (\ref{eq:ll}), shows that, up to a constant factor $1/n$,
$U$ is actually the negative log-likelihood ratio of the data from
the joint distribution of $(X,Y)$. Now the standard arguments used
in the proof of the Neyman-Pearson Lemma lead us to the inequality
(\ref{eq:bound for power}).
\end{proof}
\begin{rem*}
(1) Whenever the law of large numbers holds here, the quantity $U$
converges in probability to the Kullback-Liebler divergence from $g$
to $f$:
\[
U\to E_{f}\left(\log\frac{f(X)}{g(X)}\right),\text{ as }n,m\to\infty\text{ and }n/N\to\pi\in(0,1).
\]
Therefore $U$ estimates the K-L distance from $g$ to $f$. (2) We
see in the proof that the quantities $U$ and the critical value $c_{U}$
in (\ref{eq:bound for power}) can be replaced with 
\[
V=\frac{1}{m}\sum_{i=1}^{m}\left(\log\frac{1-p(X_{g,i})}{p(X_{g,i})}-\log\frac{1-\pi}{\pi}\right)
\]
and the critical value $c_{V}$, defined in a similar way as $c_{U}$.
\end{rem*}
Unfortunately, $U$ depends on unknown $f$ and $g$. However, with
the bound given in Proposition \ref{prop:prop 1}, if we can find
a statistic $W$ such that $P(W\not=U)\to0$ asymptotically, then
we can use $W$ to perform a test that is asymptotically most powerful.
Of course, the sampling distribution of $W$ under $H_{0}$ would
be generally unknown. But in many practical problems, one can circumvent
the difficulty with a permutation test. An important feature of $U$
is that we don't need to estimate $f$ and $g$, or the ratio $f/g$,
to calculate $U$, which we know can be increasingly impossible as
the dimensionality of the feature space $\mathcal{X}$ grows. On the
other hand, there are many powerful classification algorithms (random
forests, support vector machines, deep neural networks etc.) working
on large, complex and very high dimensional data that estimate the
class probabilities effectively. These observations motivate the following
test.

Let $\hat{p}(x)$ be a consistent estimate of the classification probability
$p(x)$. We propose the plug-in test statistic

\begin{equation}
W_{1}=\frac{1}{n}\sum_{i=1}^{n}\log\frac{\hat{p}(X_{1i})}{1-\hat{p}(X_{1i})}-\log\frac{n}{m}\label{eq:W}
\end{equation}
for a permutation test. In this test, the corresponding null hypothesis
$H_{0}^{\text{perm}}$ states that in the data, each $(X_{i},Y_{i})$
is from a distribution with $Y_{i}\sim B(1,\pi)$ and $X_{i}\sim\pi f+(1-\pi)g$
independently. Note that the original $H_{0}$ implies $H_{0}^{\text{perm}}$,
but not neccesaryly the other away around. This however will not affect
the validity of the test. Here is the test.

\vskip .1in \noindent {\bf The Classification Probability Test 1.}
Given $\alpha\in(0,1)$.
\begin{enumerate}
\item Generate a number of values of $W_{1}$ using $\hat{p}(x)$ and samples
from the null hypothesis $H_{0}^{\text{perm}}$.
\item Find the critical value $c_{1}$ satisfying (approximately) $P_{H_{0}^{\text{perm}}}(W_{1}>c_{1})=\alpha$
based on the values of $W_{1}$ generated in step (1).
\item Calculate $W_{1}$ from the original data and reject $H_{0}$ if $W_{1}>c_{1}$.\vskip .1in
\end{enumerate}
More specifically, the steps (1) and (2) above is implemented as follows.
First, following the simple random sampling principle, we randomly
divide the pooled sample $\mathcal{S}$ into subsets $\mathcal{S}_{f}^{\text{perm}}$
and $\mathcal{S}_{g}^{\text{perm}}$ of sizes $n$ and $m$ respectively.
We assign label 1 to $X_{i}$'s in $\mathcal{S}_{f}^{\text{perm}}$
and label $0$ to $X_{i}$'s in $\mathcal{S}_{g}^{\text{perm}}$ and
calculate $\hat{p}(x)$ from a given classifier (a support vector
machine classifier or a random forest classifier, for example). A
value of $W_{1}$ in (\ref{eq:W}) based on this shuffled data is
obtained. This computation is repeated independently for a sufficiently
large number of times. The critical value $c$ is then the $\alpha$th
sample percentile based on the values of $W_{1}$. Finally, $W_{1}$
is culculated from the original data using the same $\hat{p}(x)$
and we reject $H_{0}^{\text{perm}}$ whenever $W_{1}>c$.

In the next session we will demonstrate the actual power of this test
with several examples. Gnerally, the performance of the test depends
on the underlying distribution of the data and the method one uses
to obtain $\hat{p}(x)$, but in our experiments, we were always able
to find a classifier so the test based on it out performed all other
tests. The efficiency of the test depends on the rate of convergence
of the estimator $\hat{p}$. To have some idea of theoretical guarantee
of our method, we state the following proposition under a uniform
consistency condition.
\begin{prop}
\label{prop:2}Assume when $n,m\to\infty\text{ and }n/(n+m)\to\pi\in(0,1)$,
\begin{equation}
\sup_{x\in\mathcal{X}}\left(\hat{p}(x)-p(x)\right)\to0\ \text{ in probability}.\label{eq:ucc}
\end{equation}
 Then the test given above is an asymptotically most powerful test.
\end{prop}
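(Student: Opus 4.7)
The plan is to use Proposition \ref{prop:prop 1} as the benchmark: since the power of any level-$\alpha$ test is bounded above by $P_{H_1}(U > c_U)$, it suffices to show that the CP test attains this bound in the limit. I would split the task into two sub-claims: (i) $W_1 - U \to 0$ in probability under both hypotheses, and (ii) the permutation critical value $c_1$ converges in probability to $c_U$.

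For (i), I would decompose $W_1-U$ into a deterministic correction $\log(\pi/(1-\pi))-\log(n/m)$, which vanishes since $n/N\to\pi$, plus an empirical average of the pointwise discrepancies $\log(\hat p(X_{f,i})/(1-\hat p(X_{f,i})))-\log(p(X_{f,i})/(1-p(X_{f,i})))$. Because $p(X_{f,i})$ is bounded away from $0$ and $1$ on an $f$-probability set of measure approaching $1$, assumption (\ref{eq:ucc}) traps $\hat p$ in a compact subinterval of $(0,1)$ with high probability; on such a subinterval the logit map is Lipschitz, so each summand is uniformly $o_P(1)$ and an $\varepsilon$-truncation on the bad event gives $W_1-U=o_P(1)$. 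For (ii), I would exploit the fact that under the permutation null $H_0^{\mathrm{perm}}$ the labels are independent of the features, making the true regression function $P(Y=1\mid X)$ the constant $\pi$; re-applying (\ref{eq:ucc}) to the relabelled dataset yields $\hat p^{\mathrm{perm}}(x)\to\pi$ uniformly, so the permutation distribution of $W_1$ concentrates at $0$ and $c_1\to 0$. Since under the true $H_0$ we also have $U\equiv 0$ (because $p\equiv\pi$), $c_U\to 0$ as well. Combining (i) and (ii) with the Remark observation that under $H_1$, $U\to E_f\log(f/g)>0$ in probability, a Slutsky argument then gives $P_{H_1}(W_1>c_1)\to 1$, which matches the asymptotic Neyman--Pearson bound.

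I expect the main obstacle to be step (ii). Hypothesis (\ref{eq:ucc}) is phrased for a classifier trained on the original (true) labels, whereas the permutation procedure trains classifiers on shuffled labels whose underlying regression function is the constant $\pi$. Making the argument fully rigorous therefore requires reading (\ref{eq:ucc}) as a property of the estimation procedure that holds uniformly across any relabelling of the pooled sample, or imposing a parallel uniform consistency hypothesis for the permutation problem. Many standard classifiers (logistic regression, kernel methods, random forests) enjoy this kind of robustness, so the barrier is one of carefully stating the hypothesis rather than a genuine mathematical difficulty.
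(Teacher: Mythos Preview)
Your step (i) coincides with the paper's entire argument: the paper also invokes the uniform continuity of the logit on $[\delta,1-\delta]$ together with (\ref{eq:ucc}) to obtain $|W_1-U|\to 0$ in probability, then bounds $|P(W_1>c)-P(U>c)|\le P(W_1\neq U)$ for a generic cutoff $c$ and concludes. The paper never isolates your step (ii); it does not analyse the permutation critical value $c_1$ or argue that it approximates $c_U$, so the obstacle you flag---that (\ref{eq:ucc}) is stated for the classifier trained on the true labels, whereas the permutation procedure retrains on shuffled labels---is a genuine gap that the paper simply leaves open. Your final route is also slightly different: rather than comparing $W_1$ and $U$ at a common cutoff, you show that under $H_1$ both statistics concentrate near $E_f\log(f/g)>0$ while both critical values collapse to $0$, and conclude via Slutsky that both powers tend to $1$. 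This gives a clean consistency statement and avoids any regularity issues about the distribution of $U$ near $c_U$, at the cost of reading ``asymptotically most powerful'' as ``both powers tend to $1$'' rather than as an approximation of the Neyman--Pearson power at each sample size.
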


\begin{proof}
Here is a sketch of the proof. The condition (\ref{eq:ucc}) implies
that for any small $\delta>0$
\[
\sup_{x:p(x)\in[\delta,1-\delta]}\left|\left(\log\frac{\hat{p}(x)}{1-\hat{p}(x)}-\log\frac{n}{m}\right)-\left(\log\frac{p(x)}{1-p(x)}-\log\frac{\pi}{1-\pi}\right)\right|\to0\text{ in probability},
\]
due to the uniform continuity of the function $\log u/(1-u)$ on the
interval $[\delta,1-\delta]$. On the other hand, if $p(x)=0\text{ or }1$
then (\ref{eq:ucc}) implies that $\log\hat{p}(x)/(1-\hat{p}(x))\to-\infty\text{ or }\infty$
respectively. From this it can be shown that
\[
|W_{1}-U|\to0\ \text{ in probability as }n,m\to\infty\text{ and }n/(n+m)\to\pi\in(0,1),
\]
or
\begin{equation}
P(W_{1}\not=U)\to0\ \text{ as }n,m\to\infty\text{ and }n/(n+m)\to\pi\in(0,1).\label{eq:W1=00003DU}
\end{equation}
Hence
\begin{align*}
\left|P(W_{1}>c)-P(U>c)\right| & \le P(W_{1}\not=U)\to0.
\end{align*}
Therefore the power of the test based on $W_{1}$ asymtoptically achieves
the upper bound given in Proposition \ref{prop:prop 1}.
\end{proof}
\begin{rem*}
The uniform consistency condition (\ref{eq:ucc}) is somewhat strong
and artificial. More studies are needed to identify weaker conditions
for (\ref{eq:W1=00003DU}) to hold.
\end{rem*}
Next, we point out that it is possible to propose other tests using
$\hat{p}(x)$ based on more heuristic arguments. We give one here.
The form of the conditional probability in (\ref{eq:cond prob}) suggests
that the two-sample test is also equivalent to determining if the
mapping $x\to p(x)$ is a constant function. For if $p(x)$ is constant
for all $x$, then $p(x)=E_{X,Y}p(X)$ for all $x$. But $E_{X,Y}p(X)=\pi$.
Thus $p(x)=\pi$ for all $x$ and this condition forces $f\equiv g$
by (\ref{eq:cond prob}). Now to determine if $p(x)$ is a constant
function, we can consider the variance
\[
\theta=Var_{X,Y}(p(X))=E_{X,Y}p(X)^{2}-\pi^{2}.
\]
The hypotheses in (\ref{eq:the test}) can now be restated as
\begin{equation}
H'''_{0}:\theta=0\ \text{vs.}\ H'''_{1}:\theta>0.\label{eq:H0vsH1}
\end{equation}
A natural test statistic for this test can then be
\begin{equation}
W_{2}=\frac{1}{N}\sum_{i=1}^{N}\left(\hat{p}(X_{i})-\bar{p}\right)^{2},\label{eq:W1}
\end{equation}
where $\bar{p}=\frac{1}{N}\sum_{i=1}^{N}p(X_{i})$. Here is the permutation
test based on $W_{2}$:

\vskip .1in \noindent {\bf The Classification Probability Test 2.}
Given $\alpha\in(0,1)$.
\begin{enumerate}
\item Independently, generate a number of values of $W_{2}$ using $\hat{p}(x)$
and the samples from the null hypothesis $H_{0}^{\text{perm}}$.
\item Find the critical value $c_{2}$ satisfying (approximately) $P_{H_{0}^{\text{perm}}}(W_{2}>c_{2})=\alpha$
based on the values of $W_{2}$ generated in step (1).
\item Calculate $W_{2}$ from the original data and reject $H_{0}$ if $W_{2}>c_{2}$.\vskip .1in
\end{enumerate}
In our experiments, the performance of this second test can be on
a par with or, sometimes, even be slightly better than Test 1. In
our examples, this is particularly the case when random forest is
used as the classifier for estimating $p(x)$.

\section{Examples}

In this section we compare the performance of the classification probability
test (CPT) against maximum mean discrepancy (MMD) test \cite{Gretton-Borgwardt-al}
and the test based on classification accuracy (ACC) \cite{Lopez-Paz-Oquab,Ramdas-Singh-Wasserman}
. We choose the MMD test for comparison because the test is known
to have better overall performance among many commonly used two-sample
tests. We also include the ACC tests because of their similarities
to our tests. In our tests, the classifiers we use to estimate the
classification probabilities are random forests (RF) and support vector
machines (SVM), because they performed best among several popular
classifiers we tried in our experiments. The comparisons will be done
on simulated data from four different types of probability distributions
and on one real-life dataset. As a standard step in checking the validity
of a testing procedure, we first ran our test under the null hypothesis
$H_{0}:f=g$ with the datasets and confirmed that the test had the
correct type I error probability.

We use the receiver operating characteristic (ROC) curves and power
versus sample-size plots to compare the performance of the tests.
In the following, to obtain an approximation of the ROC curve for
a test, we first perform the test 400 times. Each time a new pair
of samples from the same pair of distributions is generated independently.
We collect all 400 $p$-values from these tests. The estimated ROC
curve is then the curve of the empirical distribution function of
these $p$-values. The curve allows us to determine the power of the
test at each given significance level. To obtain a plot of power versus
sample-size for a test, we set the significance level of the test
at $\alpha=0.05$ and then, for each of an increasing sequence of
sample sizes, we run the test independently 250 times to get a $p$-value
for that sample size. The power is then the proportion of these values
which are less than 0.05. In the following examples, each scatter
plot of the points of sample-size versus power has a smoothed curve
added, which is obtained from local polynomial regression fitting
to these points.

The same computations described above are applied to all examples
below. In the figures showing these examples, we write CPT-rf and
CPT-svm for the classification probability tests using RF and SVM
classifiers respectively, MMD for the maximum mean discrepancy test,
and ACC-rf and ACC-svm for the classification accuracy tests using
RF and SVM classifiers. We use function {\it ranger()} from the R
package {\it ranger} for RF classification, function {\it svm()}
from the R package {\it e1071} for SVM classification and function
{\it kmmd()} from the R package {\it kernlab} for MMD test. The
default values for the parameters in these functions are used. The
ACC-rf and ACC-svm tests use 2-fold cross-validation. All five tests
(CPT-rf, CPT-svm, ACC-rf, ACC-svm, MMD) in our examples below are
permutation tests. The null permutation distributions are all approximated
with 200 independent runs. 

\subsection{A comparison with the minimax rate for normal data}

First, we apply the tests in a standard setting to show how well it
performs against the known minimax power of the two-sample problem
\cite{Ramdas-Singh-Wasserman}. The first sample is from distribution
$N(0,\sigma^{2}I_{d\times d})$ and the second sample from $N(\delta,\sigma^{2}I_{d\times d})$
with $\delta\in\mathbb{R}^{d}$. Assume $n_{1}=n_{2}=n$. In this
case the minimax power over all tests for testing $\delta=0$ vs $\delta\not=0$
is known to be \cite{Ramdas-Singh-Wasserman}
\[
\phi(\alpha)=\Phi\left(\frac{\sqrt{d}}{\sqrt{d+n||\delta||_{2}^{2}/\sigma^{2}}}z_{\alpha}+\frac{||\delta||^{2}/\sigma^{2}}{\sqrt{8d/n^{2}+8||\delta||_{2}^{2}/n\sigma^{2}}}\right)+o(1),
\]
where $\Phi(\cdot)$ is the standard normal CDF. In our example, we
use $d=100$, $n=100$, $\sigma=2$ and try two different forms of
$\delta=(\delta_{1},\delta_{2},...,\delta_{100})$: a) a sparse case
with $\delta_{1}=1.6$ and $\delta_{i}=0$ for all $i>1$ and b) a
dense case with $\delta_{i}=0.16$ for all $i=1,...,100$. Both of
these forms of $\delta$ give the same $||\delta||_{2}=1.6$ and therefore
the same minimax lower bound.

\label{two-normals}
\begin{figure}[h]
\includegraphics[width=6in,height=3.1in]{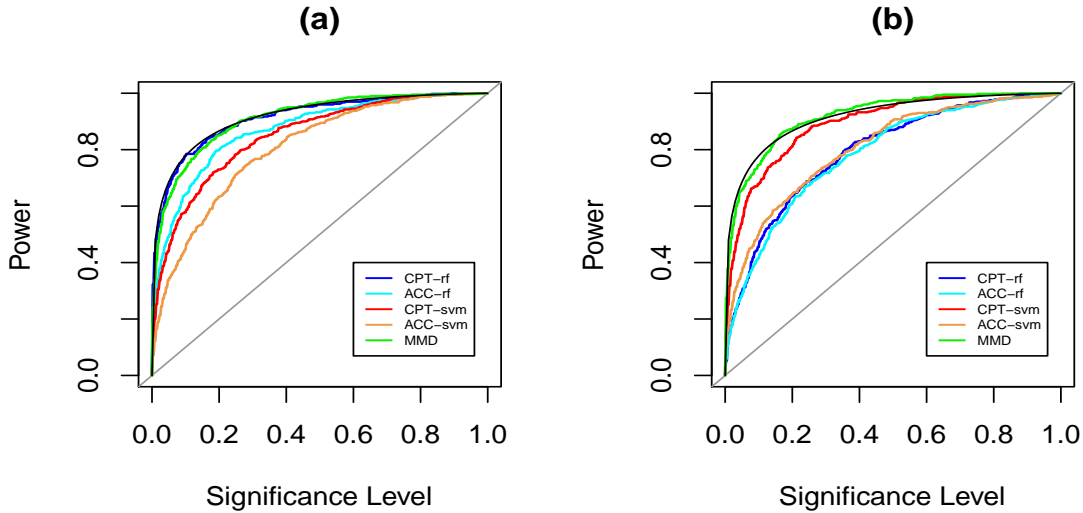}

\caption{The smooth black curves are the minimax power function $\phi(\alpha)$.
In (a) $\delta_{1}=1.6$, $\delta_{2}=\cdots=\delta_{100}=0$ and
in (b) $\delta_{1}=\cdots=\delta_{100}=0.16.$}
\end{figure}
Figure \ref{two-normals} displays the ROC curves of the tests for
both cases. The minimax power function $\phi(\alpha)$ is plotted
as black curves. We see that in (a) CPT-rf achieves the minimax bound
and in (b) CPT-svm gives a good approximation to the bound. We also
notice that the performance of MMD test are among the best in both
cases and that the default kernel used in MMD test is Gaussian. All
ACC tests are less powerful than their CPT counter parts. In the next
few examples, we will show how the CP tests outperform MMD in other
distribution settings.

\subsection{Distributions with the same means and variances but different covariances}

In this example, samples are drawn from two $d-$dimensional multivariate
normal distributions $N_{d}(\mu,\Sigma_{1})$ and $N_{d}(\mu,\Sigma_{2})$,
where the mean vector $\mu\in\mathbb{R}^{d}$ is identical in both
distributions, $\Sigma_{1}$ and $\Sigma_{2}$ are $d\times d$ covariance
matrices having the same diagonal elements but different off-diagonal
elements. Specifically, in this example, we set $d=100$, $\text{diag}(\Sigma_{1})=\text{diag}(\Sigma_{2})=(1.0,1.1,1.2,...,10.9)$,
$\mu_{1}=\mu_{2}=0_{d}$ and $\Sigma_{1}$ has all the off-diagonal
elements $0.01$ and $\Sigma_{2}$ has all the off-diagonal elements
0.21.

The performance of the tests on the data from these distributions
are shown in Figure \ref{diff-cov}. The ROC curves in (a) are based
on samples of size $m=n=100$ and the powers in (b) are for the tests
at the significance level 0.05. We see that CPT-svm test is the most
powerful and efficient test among the five tests.

\label{diff-cov}
\begin{figure}[H]
\includegraphics[width=6in,height=3.1in]{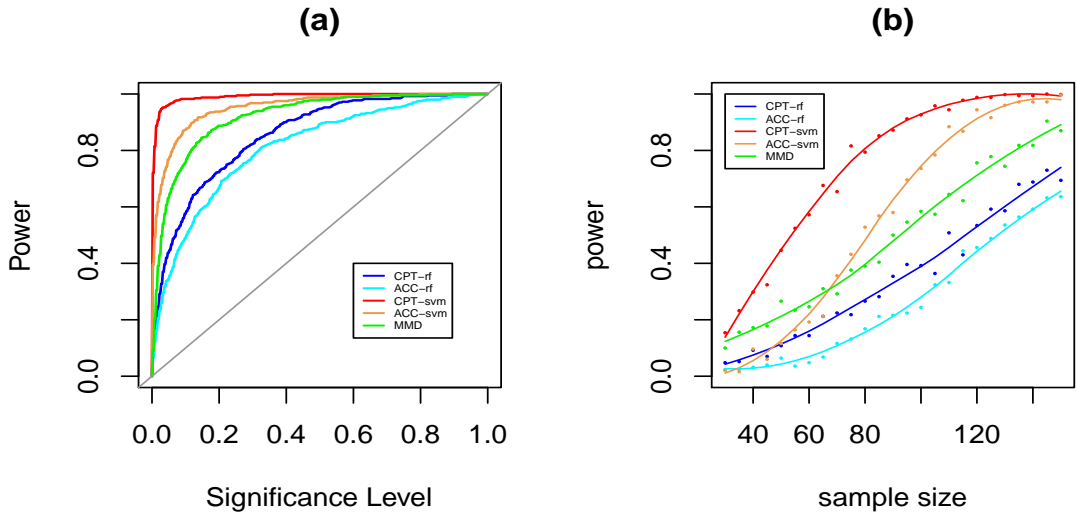}

\caption{Tests for distributions with the same mean vector but different covariance
matrices.}
\end{figure}

\subsection{Gaussian graphical models with different sparsities}

Here we have samples drawn from two Gaussian graphical models with
the same mean vectors but different precision matrices $Q_{1}$ and
$Q_{2}$ to reflect different degrees of connectivities of the underlying
graphs. More specifically, let $A_{1}$ be a weighted $d\times d$
symmetric adjacency matrix of a simple graph with its off-diagonal
entries randomly and independently taken from a uniform distribution
$U(0,1)$. Let $A_{2}$ be a more sparse adjacency matrix obtained
from $A_{1}$ by setting randomly and independently some of the entries
in $A_{1}$ to 0 according to a Bernoulli distribution $B(1,\tau)$.
Let $D_{1}$ and $D_{2}$ be the diagonal degree matrix of $A_{1}$
and $A_{2}$ respectively so that their $i$-th diagonal element is
the sum of the $i$-th row of their corresponding adjacency matrices.
Let $Q_{1}=(D_{1}-A_{1})+\delta_{1}I$ and $Q_{2}=(D_{2}-A_{2})+\delta_{2}I$,
where $\delta_{1},\delta_{2}>0$ are added to diagonals to make $Q_{1}$
and $Q_{2}$ nonsingular. In our experiment, means are the zero vector
$0_{200}$, $Q_{1}$ and $Q_{2}$ are 200 $\times$ 200 matrices (the
graphs have 200 nodes each) and $Q_{2}$ is obtained from $Q_{1}$
as described above with $\tau=0.65$.

\label{diff-sparsity}
\begin{figure}[H]
\includegraphics[width=6in,height=3.1in]{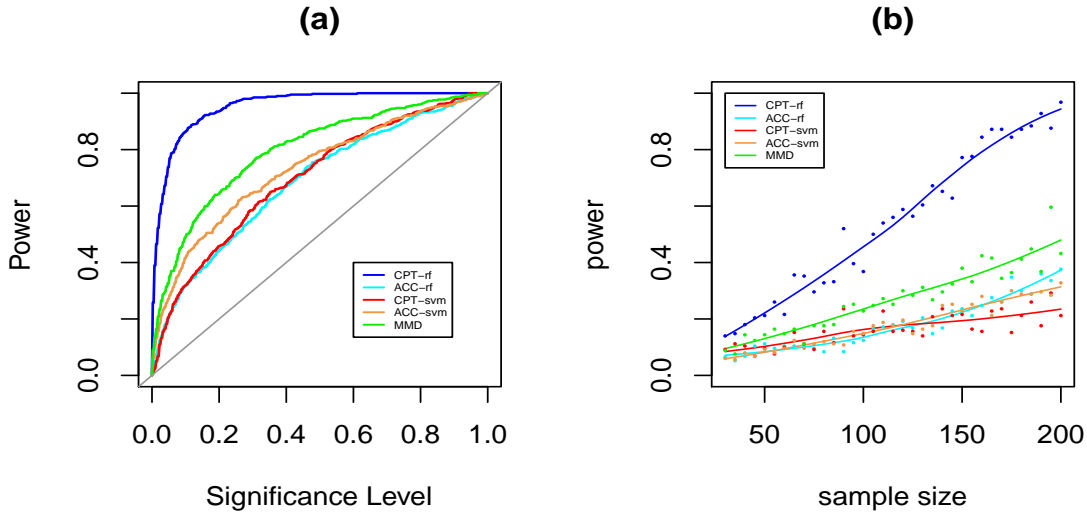}

\caption{Tests for different Gaussian graphical models with the same mean vector
but different rate of connectivities }
\end{figure}
The performance of the tests on data from these distributions are
shown in Figure \ref{diff-sparsity}. The ROC curves in (a) are based
on samples of sizes $n_{1}=n_{2}=150$ and the powers in (b) are for
the tests at the significance level 0.05. In this example, the CPT-rf
test is significantly more powerful and efficient than other tests.

\subsection{Distributions with the same means and covariances but one different
marginal distribution}

In this fourth example, sample one is from a $d$ dimensional distribution
which is a product measure of a one dimensional exponential distribution
with mean 1 and $d-1$ iid normal distributions of mean 1 and variance
1, $Exp(1)\times N_{d-1}(1_{d-1},I_{d-1})$. The second sample is
drawn from a product measure of $d$ iid normal distributions of mean
1 and variance 1, $N_{d}(1_{d},I_{d})$. Therefore both samples have
identical first and second moments and the only difference between
the two distributions is that one component in the first distribution
is exponential and the corresponding component in the second distribution
is normal. In the experiment, we use $d=100$.

\label{diff-margins}
\begin{figure}[h]
\includegraphics[width=6in,height=3.1in]{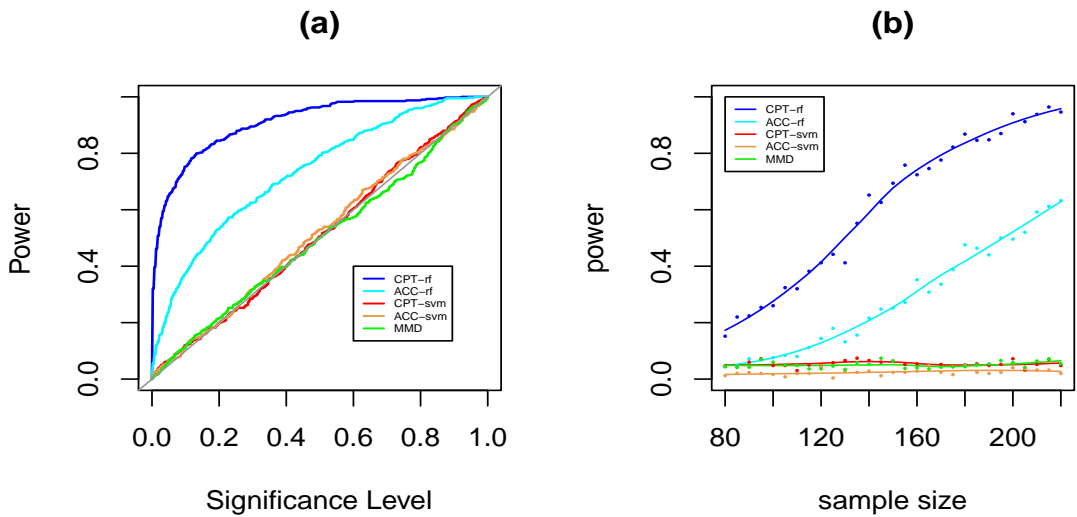}

\caption{Tests for distributions with the same means and covariances but one
different marginal distribution. }
\end{figure}
The performance of the tests on data from these distributions are
shown in Figure \ref{diff-margins}. The ROC curves in (a) are based
on samples of sizes $m=n=200$ and the powers in (b) are for the tests
at the significance level 0.05. We see that CAP-rf test outperforms
other tests in terms of the power and efficiency. In contrast, MMD
and SVM tests fail to detect any difference between the two distributions
from the samples. 

\subsection{Movie review sentiment data}

In this example, the samples of the same size $m=n$ are drawn from
two sets of movie reviews with 1000 reviews in each set. Of these
two sets of reviews, one has all the reviews with positive sentiments
and another all the reviews with negative sentiments. The null hypothesis
in the test is that there is no difference in sentiments between two
sets of reviews. All the reviews within each sample are encoded into
a so-called document-term matrix in which each review is represented
by a row of numbers 0 or 1 depending on wether a term which appears
in at least 5\% of all the reviews of the combined sample also appears
in this review. To make the data more challenging, in our experiment,
we purposely removed 50 highest influential words, including words
like ``bad'', ``worst'', ``dull'', ``excellent'', ``perfect''
etc. The dimension of the data depends on the samples we draw. It
varies around 1000.

The performance of the tests on these data are shown in Figure \ref{text-data}
The ROC curves in (a) are based on samples of sizes $m=n=75$ and
the powers in (b) are for the tests at the significance level 0.05.
It shows that CPT-rf test is best in terms of power and efficiency.

\label{text-data}
\begin{figure}[h]
\includegraphics[width=6in,height=3.1in]{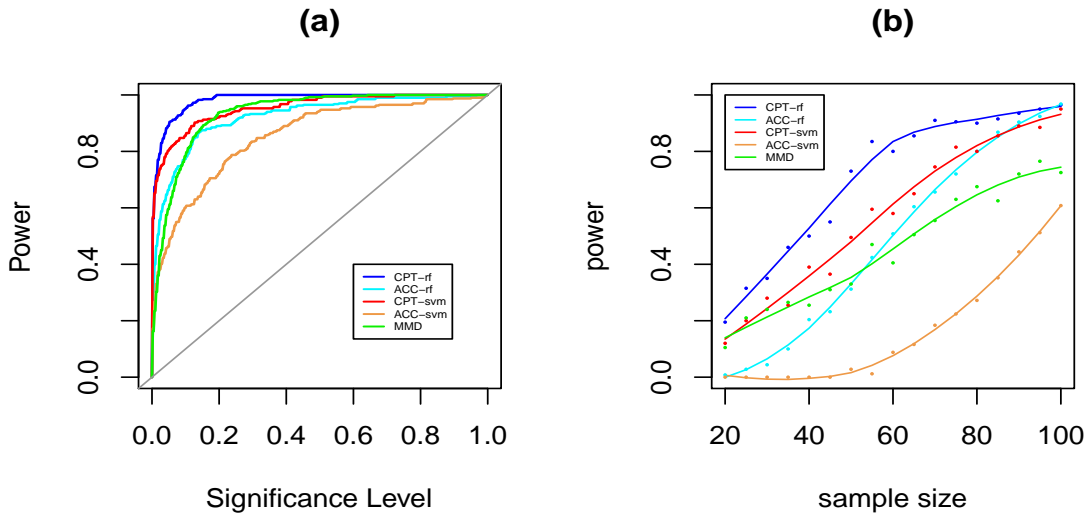}

\caption{Tests for difference of text files with two sentiments. }
\end{figure}

\subsection{Comparing Classification Probability Test 1 and Test 2}

In this last example, we use the data from the first four examples
above to show the difference between Test 1 and Test 2 based on classification
probabilities. It shows that if SVM is used as the classifier, then
Test 1 is consistently better than Test 2. But if the classifier is
RF, then Test 2 can often be better.

\label{test1-test2}
\begin{figure}[h]
\includegraphics[width=6in,height=6in]{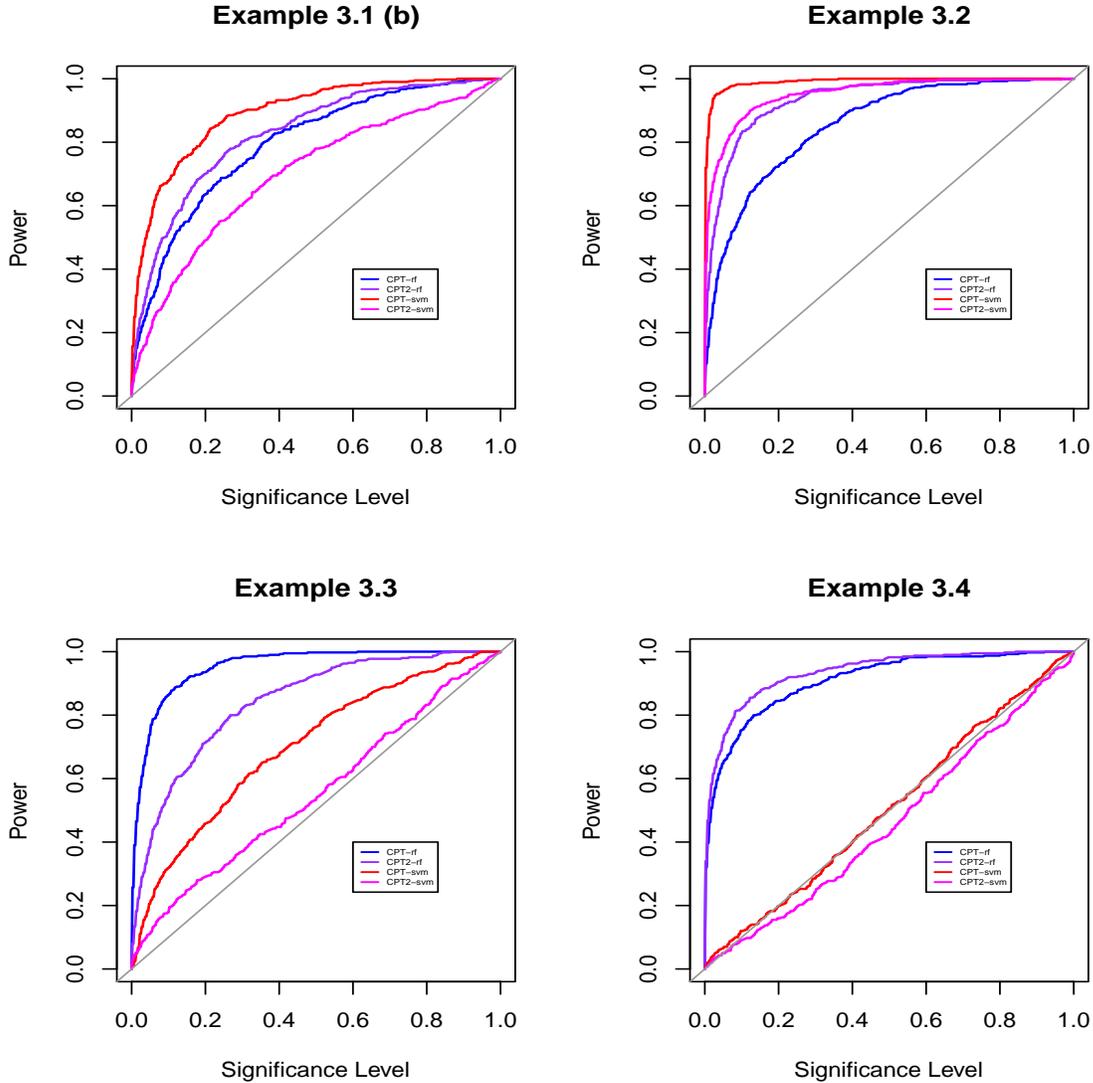}

\caption{ROC curves for CP tests using $W_{1}$ and $W_{2}$ on the datasets
from the first four examples. }
\end{figure}

\section{Summary and discussion}

We propose a test for two-sample problem based on estimates of classification
probabilities obtained from a consistent classification algorithm.
This test is effectively a likelihood-ratio-based test in which the
ratio is estimated not by maximizing the likelihoods involved - which
would require knowledge on the density functions, but by estimating
the odds ratio of classification probabilities. Our test is more powerful
and efficient than many other tests.

Our test also has taken advantage of the recent advances in classification
algorithms and computing software to derive its performance in computation.
For any given dataset, tests using different classifiers deliver different
performance, depending on the structure of underlying probability
distributions. We conjecture that this phenomenon is due to the quality
of estimates of the classification probability when using different
classifiers on different types of data. More studies are needed to
better understand the statistical properties of these estimates. In
practice, one can choose a best classifier using K-fold cross-validation
to ensure the best performance.

Because these are permutation tests, they can be computationally intensive.
To get some idea on CPU time usage in a single test, we provide the
following table, in which each entry for time is the elapsed time
in seconds returned from the function {\it system.time()} in R running
on a MacBook Pro with 2.2 GHz Intel Core i7 processor and macOS Mojave
version 10.14.5. In all these tests, every null permutation distribution
was approximated using 200 independent runs.

\begin{table}[H]
\caption{Elapsed Time of the Tests}

\vspace{0.2in}

\begin{tabular}{|c|c|c|c|c|c|c|c|}
\hline 
 & $d$ & $n$ & CPT-svm & ACC-svm & CPT-rf & ACC-rf & MMD\tabularnewline
\hline 
\hline 
Example 1 & 100 & 100 & 12.143 & 8.221 & 15.532 & 14.293 & 2.408\tabularnewline
\hline 
Example 5 & 1000 & 75 & 63.772 & 48.820 & 30.772 & 43.555 & 4.373\tabularnewline
\hline 
\end{tabular}
\end{table}

\end{document}